\let\oldvec\vec
\let\vec\oldvec     
\journalname{}
\begin{document}

\title{Two-parameter regularization of ill-posed spherical pseudo-differential equations in the space of continuous functions
}

\titlerunning{Two-parameter regularization}        

\author{Hui Cao        \and
        Sergei V. Pereverzyev \and
				Ian~H.~Sloan \and
				Pavlo Tkachenko
}


\institute{H. Cao \at
							Guangdong Province Key Laboratory of Computational Science, Sun Yat-sen University, Guangzhou, China \\
              \email{caohui6@mail.sysu.edu.cn}           
           \and
         S. V. Pereverzyev \at
              Johann Radon Institute for Computational and Applied Mathematics, Austrian Academy of Sciences, Altenbergerstrasse 69, 4040 Linz, Austria \\
              \email{	sergei.pereverzyev@oeaw.ac.at} 							
							\and
         I. H. Sloan \at
              School of Mathematics and Statistics, University of New South Wales, Sydney NSW 2052, Australia \\
              \email{I.Sloan@unsw.edu.au} 						
						 \and
         P. Tkachenko \at
              Johann Radon Institute for Computational and Applied Mathematics, Austrian Academy of Sciences, Altenbergerstrasse 69, 4040 Linz, Austria \\
              \email{pavlo.tkachenko@oeaw.ac.at}
}

\date{Received: date / Accepted: date}

\maketitle

\begin{abstract}

In this paper, a two-step regularization method is used to solve an
ill-posed spherical pseudo-differential equation in the presence of noisy
data. For the first step of regularization we approximate the data by
means of a spherical polynomial that minimizes a functional
with a penalty term consisting of the squared norm in a Sobolev
space. The second step is a regularized collocation method. An error
bound is obtained in the uniform norm, which is potentially smaller than that for
either the noise reduction alone or the regularized collocation alone. We
discuss an \textit{a posteriori} parameter choice, and present some
numerical experiments, which support the claimed superiority of the
two-step method.

\keywords{Two-parameter regularization \and spherical pseudo-differential
equations \and quasi-optimality criterion}
\end{abstract}

\section{Introduction}
Mathematical models appearing in geoscience commonly have the form of an ill-posed spherical pseudo-differential equation

\begin{equation} \label{eq:1}
	Ax=y,
\end{equation}
where $A$ is a pseudo-differential operator that relates continuous
functions $x\in C(\Omega_R)$  and $y\in C(\Omega_\rho)$ defined on
concentric spheres $\Omega_R, \Omega_\rho \in \mathbb{R}^3$ of radii
$R\leq\rho$. For example, in satellite geodesy, this approach has been
introduced in \cite{F1999, S1983} where the spheres $\Omega_R,
\Omega_\rho$ are models of respectively the surface of the Earth
and the surface traversed by the satellite.

Recall that a spherical pseudo-differential operator
$A:C(\Omega_R)\rightarrow C(\Omega_\rho)$ is a linear operator that
assigns to any $x\in C(\Omega_R)$ a function

\begin{equation} \label{eq:2}
	 Ax:=\sum^{\infty}_{k=0}a_k\sum^{2k+1}_{j=1}\widehat{x}_{k,j}\frac{1}{\rho}Y_{k,j}\left(\frac{\cdot}{\rho}\right)\ \in C(\Omega_\rho),
\end{equation}
where

	\[
	\widehat{x}_{k,j}=\left\langle \frac{1}{R}Y_{k,j}\left(\frac{\cdot}{R}\right), x(\cdot)\right\rangle_{L_2(\Omega_R)}:=\frac{1}{R}\int_{\Omega_R}x(\tau)Y_{k,j}\left(\frac{\tau}{R}\right)d\Omega_R(\tau)
	\] are the spherical Fourier coefficients, and $Y_{k,j}(\cdot),
j=1,2,...,2k+1$, are the spherical harmonics \cite{M1966} of degree $k$
which are $L_2$-orthonormal with respect to the unit sphere $\Omega_1\in
\mathbb{R}^3$, as a result of which

\begin{equation}\label{eq:orthog}
\left\langle
\frac{1}{R}Y_{k,j}\left(\frac{\cdot}{R}\right),\frac{1}{R}Y_{k',j'}\left(\frac{\cdot}{R}\right)\right\rangle_{L_2(\Omega_R)}=\left\langle
Y_{k,j},Y_{k',j'}\right\rangle_{L_2(\Omega_1)}=\delta_{kk'}\delta_{jj'}.
\end{equation}
The sequence of real numbers $\left(a_k\right)_{k=0}^\infty$ is
referred to as the spherical symbol of $A$.  We shall assume that
$a_k$ is positive, and converges monotonically to zero.

In the case when the symbol sequence $a_k$ tends to zero fast enough
the operator $A$ is compact. Therefore, its inverse $A^{-1}$ is unbounded,
and keeping in mind Hadamard's definition of a well-posed problem
(existence, uniqueness, and continuity of inverse), we conclude that for
this case the first and the third conditions are violated, and the problem
(\ref{eq:1}), (\ref{eq:2}) with $y\in C(\Omega_R)$ becomes ill-posed.
Therefore, a regularization technique should be employed for solving it
\cite{E1996}.

As examples of the ill-posed problem (\ref{eq:1}), (\ref{eq:2}) we can
mention the satellite-to-satellite tracking problem (SST-problem) with
$a_k=\frac{k+1}{\rho}\left(\frac{R}{\rho}\right)^k$, the satellite gravity
gradiometry problem (SGG-problem) with
$a_k=\frac{(k+1)(k+2)}{\rho^2}\left(\frac{R}{\rho}\right)^k$, etc. (for
more details on these and other examples we can refer an interested reader
to \cite{F1999}).  These problems are severely ill-posed because of
the occurrence of the geometric factor $(R/\rho)^k$.

It is worth mentioning that many practical applications use
a finite dimensional approximation of the solution of (\ref{eq:1}),
(\ref{eq:2}). For example, Earth Gravity Models, such as EGM96 or EGM2008 \cite{P2008} are
parametrized by the spherical Fourier coefficients up to some prescribed degree
$M$.

Note that in applications the function $y$ is assumed to be continuous.
However, in practice one is provided just with a finite number of points
$\{t_{i}\}_{i=1}^{N}\subset \Omega_\rho$ at which information about the
values of $y$ is collected. It should be noted also that the pointwise
data $y^\epsilon(t_i)$ contain measurement errors, which can be
modeled, for example, in the following way:

\[
    \left|y^\epsilon(t_i)-y(t_i)\right|\leq \epsilon_{i},\ i=1,\ldots,N.
\]
We assume for convenience that there exists a function $y^\epsilon\in
C(\Omega_\rho)$ standing for the noisy version of the original function
$y$, such that

$$\|y^\epsilon-y\|_{C(\Omega_\rho)}\leq \epsilon:=\max_{1\leq i\leq N} \{\epsilon_{i}\},$$
where $\epsilon_i$ are measurement errors.

In such a setup the problem (\ref{eq:1}), (\ref{eq:2}) is reduced to the
following spherical pseudo-differential operator equation
\begin{equation} \label{eq:3}
A_Mx:=\sum^M_{k=0}a_k\sum^{2k+1}_{j=1}\widehat{x}_{k,j}\frac{1}{\rho}Y_{k,j}\left(\frac{\cdot}{\rho}\right)=y^\epsilon.
\end{equation}

Several regularization techniques can be used for treating (\ref{eq:3}).
For example, the method where the discretization level $M$ plays the role
of regularization parameter was discussed in \cite{B1996, B2003, H2003}.
However, in our case we assume that the value of $M$ is prescribed,
thus this approach cannot be used.

The problem (\ref{eq:3}), but without noise in the right-hand side, was studied in \cite{L2006}. In this noise-free case the solution $x$ can be approximated by solving the equation

$$A_Mx=\widehat{V}_My,$$
where $\widehat{V}_M:C(\Omega_\rho)\rightarrow\mathbb{P}_M(\Omega_\rho)$
is the so-called quasi-interpolatory operator. Here by
$\mathbb{P}_M(\Omega_\rho)$  we denote the set of all spherical
polynomials of degree less than or equal to $M$, or in other words the
restriction to $\Omega_\rho$ of the polynomials in $\mathbb{R}^3$ of
degree less than or equal to $M$. Thus, in \cite{L2006} the authors
suggest constructing a polynomial approximation of $y$ from the original
pointwise data $y(t_i), i=1,2,\ldots,N$, and then formally inverting
the operator $A_M$.

In principle, this idea can be used also for the ill-posed case
(\ref{eq:3}). However, in contrast to the approximation of a noise-free
continuous function $y$ on the sphere by means of polynomials, which has
been discussed by many authors (see, for example, \cite{G1997, R2003,
S2011, S1979, W1981}), the approximation of noisy functions $y^\epsilon$
has been studied only recently in \cite{A2012, P2014}, to the best of our
knowledge. Applying the method from \cite{A2012, P2014} we first
perform so-called data-smoothing (or noise reduction). After this
data preprocessing the formal inversion of $A_M$ should be safer.
However, for performing the noise reduction step one needs \textit{a
priori} information about the smoothness of the function $y$, which is
usually not available, a fact that makes the direct application of the
scheme \cite{P2014} not always appropriate.

In a different direction, for estimating the Fourier coefficients
$\widehat{x}_{k,j}$ directly from noisy measurements $y^\epsilon(t_i)$ a
regularized collocation method has been recently presented in
\cite{N2013}. This method is based on the standard and widely used
Tikhonov-Phillips regularization. However, it is well-known that the
Tikhonov-Phillips method suffers from saturation \cite{M2004, N1997},
meaning that the accuracy of reconstruction cannot be improved regardless
of the smoothness of the solution $x$. Another point is that while
the Tikhonov-Phillips method has been well studied in the
Hilbert space $L_2$, to the best of our knowledge, no analysis
has been done in the space of continuous functions, a natural choice
for our noise model.

In the present study we combine these two approaches.
Moreover, in contrast to the previous results, we will analyze the
approximation in the space of continuous functions. A combination of two
regularization methods into one can be seen as a two-step regularization,
in which we use the composition $R_{\alpha,M}\circ
T_{\lambda,M}$ of data smoothing operator $T_{\lambda,M}$, and a
regularized collocation operator $R_{\alpha,M}$. In the literature there
are not so many studies on two-step regularization, and we can
refer only to \cite{K2006, K2008}. However, the analysis in those
papers does not correspond to the setting of our problem
(\ref{eq:1})--(\ref{eq:3}).

The paper is organized as follows.  In the next section we present
the regularization method for noise reduction and define the data
smoothing operator $T_{\lambda,M}$. In Section 3 we will give a short
overview of the regularized collocation method, and define the operator
$R_{\alpha,M}$. Section 4 is devoted to theoretical error bounds for
the constructed two-parameter regularization. We will show that
the approximation has the potential to perform at least as well
as the better of the one-parameter regularizations which are involved in
the composition. Finally, in the last section we discuss an \textit{a
posteriori} parameter choice rule, and present some numerical
experiments supporting the claimed superiority of our method.

\section{Data noise reduction}
At the first step of our scheme we approximate the noisy continuous
function $y^\epsilon\in C(\Omega_\rho)$ by means of a spherical polynomial
$p_M \in \mathbb{P}_M(\Omega_\rho)$. As discussed in the
Introduction, instead of $y$ we are provided only with pointwise
measurements $y^\epsilon(t_i), i=1,2,\ldots,N$. Therefore we introduce the
sampling operator $S_N:C(\Omega_\rho)\rightarrow \mathbb{R}^N_\omega$ for
which

$$S_Ny^\epsilon:=(y^\epsilon(t_1), y^\epsilon(t_2),\ldots,y^\epsilon(t_N)).$$
By $\mathbb{R}^N_\omega$ we denote the vector space $\mathbb{R}^N$
equipped with the inner product

$$\left\langle \eta,\gamma\right\rangle_\omega:=\sum_{i=1}^N\omega_i\eta_i\gamma_i,\ \eta, \gamma \in \mathbb{R}^N;$$
the corresponding norm is
$\left\|\eta\right\|_\omega=\left\langle
\eta,\eta\right\rangle_\omega^{1/2}$. Here
$\omega_1,\omega_2,\ldots,\omega_N$ are positive weights in a cubature
formula, and $t_1, t_2,\ldots,t_N\in \Omega_\rho$ are the corresponding
cubature points, with the cubature rule having the property of being
exact for all polynomials of degree up to $2M$,
\begin{equation}\label{eq:5}
\forall p\in\mathbb{P}_{2M}(\Omega_\rho),\ \ \sum_{i=1}^N \omega_ip(t_i)=\int_{\Omega_\rho}p(\zeta)d\Omega_\rho(\zeta).
\end{equation}
A method for approximating $y$ from $S_Ny^\epsilon\in\mathbb{R}^N_\omega$
based on such a cubature rule (but with all weights equal) was
proposed recently in \cite{A2012}, while its generalization to
other positive weights was presented in \cite{P2014}.

We briefly outline the method from \cite{P2014} and its extension
to pseudo-differential operator equations. We start with the observation
that the space $\mathbb{P}_M(\Omega_\rho)$ of spherical polynomials
$p$ of degree at most $M$ can be considered as the reproducing kernel
Hilbert space (RKHS) $\mathcal{H}_K$ generated by the kernel
\begin{equation}\label{eq:6}
K(t,\tau)=\sum_{k=0}^M \beta_k^{-2}\sum_{j=1}^{2k+1}\frac{1}{\rho} Y_{k,j}\left(\frac{t}{\rho}\right) \frac{1}{\rho} Y_{k,j}\left(\frac{\tau}{\rho}\right),\ t,\tau\in\Omega_\rho,
\end{equation}
where $\beta=(\beta_1,\beta_2,\ldots,\beta_M,\ldots)$ is a non-decreasing
sequence of positive parameters. Note that the inner product of
$\mathcal{H_K}$ associated with this kernel can be written as
\begin{equation}\label{eq:inner}
\left\langle f,g \right\rangle_{\mathcal{H}_K}=\sum_{k=0}^M \frac{\beta_k^2}{\rho^2}\sum_{j=1}^{2k+1} \left\langle Y_{k,j}\left(\frac{\cdot}{\rho}\right),f\right\rangle_{L^2(\Omega_\rho)} \left\langle Y_{k,j}\left(\frac{\cdot}{\rho}\right),g\right\rangle_{L^2(\Omega_\rho)}.
\end{equation}
The reproducing property $\langle
f,K(\cdot,\tau)\rangle_{\mathcal{H}_K}=f(\tau)$ for $\tau \in \Omega_R$ can be verified easily. In
this paper the numbers $\beta_1,\beta_2,\ldots,\beta_N$ are assumed to be
given \textit{a priori}. We shall see their role later.

The approximation $p_{\min}$ studied in \cite{A2012, P2014} appears
as the minimizer of the following functional

\begin{equation}\label{eq:7}
p_{\min}=\arg \min\left\{\left\|S_Np-S_Ny^\epsilon\right\|_\omega^2+\lambda\left\|p\right\|^2_{\mathcal{H}_K},\ p\in \mathbb{P}_M(\Omega_\rho)\right\},
\end{equation}
where $\lambda\geq 0$ is a regularization parameter. Note that
(\ref{eq:7}) can be seen as the definition of the data smoothing operator
$T_{\lambda,M}:C(\Omega_\rho)\rightarrow \mathbb{P}_M(\Omega_\rho)$ such
that $p_{\min}=T_{\lambda,M}y^\epsilon$.  Note that the regularization
term involves the $\mathcal{H}_K$-norm of $p$, and hence depends on the
rate of growth of the parameters $\beta_k$. The solution of (\ref{eq:7})
is given by the following theorem.

\begin{theorem} Assume that the points $\left\{t_i\right\}$ and weights
 $\left\{\omega_i\right\}$ are such that (\ref{eq:5}) holds.
 Then the minimizer $T_{\lambda,M}y^\epsilon=p_{\min}$ in (\ref{eq:7}) has
the form

\begin{equation}\label{eq:8}
\left(T_{\lambda,M}y^\epsilon\right)(\cdot)= \sum_{k=0}^M \frac{1}{1+\lambda\beta_k^2}\sum_{j=1}^{2k+1} \frac{1}{\rho}Y_{k,j}\left(\frac{\cdot}{\rho}\right)\sum_{i=1}^N \omega_i\frac{1}{\rho}Y_{k,j}\left(\frac{t_i}{\rho}\right)y^\epsilon(t_i).
\end{equation}
\end{theorem}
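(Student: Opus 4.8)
The plan is to exploit the fact that the minimization in (\ref{eq:7}) takes place over the finite-dimensional space $\mathbb{P}_M(\Omega_\rho)$, so that it reduces to an unconstrained quadratic minimization over the expansion coefficients, which the cubature exactness (\ref{eq:5}) will decouple completely. First I would write an arbitrary $p\in\mathbb{P}_M(\Omega_\rho)$ in the $L^2(\Omega_\rho)$-orthonormal basis guaranteed by (\ref{eq:orthog}),
\begin{equation*}
p=\sum_{k=0}^M\sum_{j=1}^{2k+1} c_{k,j}\,\frac{1}{\rho}Y_{k,j}\left(\frac{\cdot}{\rho}\right),
\end{equation*}
so that the penalty term becomes diagonal in the coefficients. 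Indeed, since $\langle Y_{k,j}(\cdot/\rho),p\rangle_{L^2(\Omega_\rho)}=\rho\, c_{k,j}$, formula (\ref{eq:inner}) gives at once $\|p\|_{\mathcal{H}_K}^2=\sum_{k,j}\beta_k^2 c_{k,j}^2$.

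The key step is to handle the data-fidelity term. I would introduce the sampled basis vectors $v_{k,j}:=S_N(\frac{1}{\rho}Y_{k,j}(\cdot/\rho))\in\mathbb{R}^N_\omega$ and observe that, because the product of two elements of $\mathbb{P}_M(\Omega_\rho)$ lies in $\mathbb{P}_{2M}(\Omega_\rho)$, the exactness (\ref{eq:5}) turns the discrete inner product into the continuous one,
\begin{equation*}
\langle v_{k,j},v_{k',j'}\rangle_\omega=\int_{\Omega_\rho}\frac{1}{\rho}Y_{k,j}\left(\frac{\zeta}{\rho}\right)\frac{1}{\rho}Y_{k',j'}\left(\frac{\zeta}{\rho}\right)d\Omega_\rho(\zeta)=\delta_{kk'}\delta_{jj'},
\end{equation*}
so that $\{v_{k,j}\}$ is an orthonormal system in $\mathbb{R}^N_\omega$; in other words $S_N$ acts isometrically on $\mathbb{P}_M(\Omega_\rho)$. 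This is the crux of the argument, and it is the only place where (\ref{eq:5}) is used. Note that $y^\epsilon$ need not be a polynomial: expanding $S_Np=\sum_{k,j}c_{k,j}v_{k,j}$ and using orthonormality,
\begin{equation*}
\|S_Np-S_Ny^\epsilon\|_\omega^2=\|S_Ny^\epsilon\|_\omega^2-2\sum_{k,j}c_{k,j}\hat u_{k,j}+\sum_{k,j}c_{k,j}^2,\qquad \hat u_{k,j}:=\langle v_{k,j},S_Ny^\epsilon\rangle_\omega,
\end{equation*}
and only the constant term $\|S_Ny^\epsilon\|_\omega^2$ carries the (unknown, possibly non-polynomial) data.

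Collecting the two pieces, the functional in (\ref{eq:7}) becomes, up to the additive constant $\|S_Ny^\epsilon\|_\omega^2$,
\begin{equation*}
\sum_{k=0}^M\sum_{j=1}^{2k+1}\Big[(1+\lambda\beta_k^2)c_{k,j}^2-2c_{k,j}\hat u_{k,j}\Big],
\end{equation*}
which is a sum of independent, strictly convex scalar quadratics (strict because $1+\lambda\beta_k^2>0$). Setting each derivative to zero yields the unique minimizer $c_{k,j}=\hat u_{k,j}/(1+\lambda\beta_k^2)$, and substituting $\hat u_{k,j}=\sum_{i=1}^N\omega_i\frac{1}{\rho}Y_{k,j}(t_i/\rho)y^\epsilon(t_i)$ back into the expansion of $p$ reproduces (\ref{eq:8}). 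The argument is essentially routine once the isometry is in hand; the only point that requires genuine care is confirming that the cubature is exact up to degree $2M$ rather than merely $M$, since the orthonormality of the $v_{k,j}$ — and hence the decoupling — relies on integrating the degree-$2M$ products exactly.
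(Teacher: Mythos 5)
Your proof is correct, but it takes a genuinely different route from the paper's. The paper starts from the standard RKHS normal-equation formula $T_{\lambda,M}y^\epsilon=(\lambda I+S_N^*S_N)^{-1}S_N^*S_Ny^\epsilon$ (cited as known), computes the adjoint $S_N^*$ via the reproducing property of the kernel $K$, arrives at the operator equation $\lambda p_{\min}(\cdot)+\sum_{i}\omega_iK(t_i,\cdot)p_{\min}(t_i)=\sum_i\omega_iK(t_i,\cdot)y^\epsilon(t_i)$, and only then uses the cubature exactness (\ref{eq:5}) to replace the discrete sum $\sum_i\omega_iK(t_i,t)p_{\min}(t_i)$ by $\left\langle p_{\min},K(\cdot,t)\right\rangle_{L^2(\Omega_\rho)}$, finishing by matching spherical-harmonic coefficients. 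You instead bypass the normal equations and the kernel entirely: you expand $p$ in the orthonormal basis, diagonalize the penalty via (\ref{eq:inner}), and use (\ref{eq:5}) to show that the sampled basis vectors $v_{k,j}$ are orthonormal in $\mathbb{R}^N_\omega$ (i.e.\ $S_N$ is an isometry on $\mathbb{P}_M(\Omega_\rho)$), so that the whole functional splits into independent strictly convex scalar quadratics. The two uses of (\ref{eq:5}) are mathematically equivalent (passing between discrete and continuous inner products of polynomials of degree at most $2M$), and both proofs correctly avoid applying exactness to terms containing the non-polynomial data $y^\epsilon$. What your route buys: it is more elementary and self-contained, since it does not invoke the unproved representer formula (\ref{eq:9}), and it delivers existence and uniqueness of the minimizer directly from strict convexity, transparently including the case $\lambda=0$. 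What the paper's route buys: the operator-theoretic formulation with $S_N^*$ sets up exactly the machinery reused in the second regularization step, where the minimizer (\ref{eq:18}) is written by the same normal-equation template, so the two steps of the method are treated in one uniform framework.
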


\begin{proof}

It is known that the minimizer of the  functional in (\ref{eq:7}) can
be written as

\begin{equation}\label{eq:9}
T_{\lambda,M}y^\epsilon=(\lambda I+S_N^*S_N)^{-1}S_N^*S_Ny^\epsilon,
\end{equation}
where $S_N^*:\mathbb{R}^N_\omega \rightarrow \mathcal{H}_K$ is the adjoint
of $S_N$, and $I$ is the identity operator in $\mathcal{H}_K$. By
definition, for $(\eta_i)_{i=1}^N\in\mathbb{R}^N_\omega$ we have
\begin{eqnarray*}
\left\langle \eta, S_Nf\right\rangle_\omega&=&\sum_{i=1}^N\eta_i f(t_i)\omega_i=\sum_{i=1}^N\eta_i \left\langle K(t_i,\cdot),f(\cdot)\right\rangle_{\mathcal{H}_K} \omega_i \\
&=&\left\langle \sum_{i=1}^N\omega_iK(t_i,\cdot)\eta_i,f(\cdot)\right\rangle_{\mathcal{H}_K},
\end{eqnarray*}
thus there holds

$$(S_N^*\eta)(\cdot)=\sum_{i=1}^N\omega_iK(t_i,\cdot)\eta_i, \ \forall \eta \in \mathbb{R}^N_\omega,$$
and thereby

$$(S_N^*S_N f)(\cdot)=\sum_{i=1}^N\omega_iK(t_i,\cdot)f(t_i).$$

Inserting this expression into (\ref{eq:9}), we observe that $p_{\min}$
solves

\begin{equation}\label{eq:10}
\lambda p_{\min}(\cdot)+\sum_{i=1}^N\omega_iK(t_i,\cdot)p_{\min}(t_i)=\sum_{i=1}^N\omega_iK(t_i,\cdot)y^\epsilon(t_i)
\end{equation}
The spherical harmonic expansion of the polynomial
$p_{\min}=T_{\lambda,M} y^{\epsilon}$ is
\begin{equation}\label{eq:pmin}
p_{\min}(t)=\sum_{k=0}^M\sum_{j=1}^{2k+1} \left\langle p_{\min},\frac{1}{\rho}Y_{k,j}\left(\frac{\cdot}{\rho}\right)\right\rangle_{L^2(\Omega_\rho)}\frac{1}{\rho}Y_{k,j}\left(\frac{t}{\rho}\right),\quad t\in\Omega_{\rho}.
\end{equation}
To find the coefficients in this expansion, we write the second term
on the left-hand side of \eqref{eq:10}, on using \eqref{eq:5}
and then \eqref{eq:6}, as
\begin{eqnarray*}
\sum_{i=1}^N\omega_i K(t_i,t)p_{\min}(t_i)&=&\left\langle p_{\min}, K(\cdot,t)\right\rangle_{L^2(\Omega_\rho)}\\
&=&\sum_{k=0}^N\beta_k^{-2}\sum_{j=1}^{2k+1}\left\langle p_{\min},\frac{1}{\rho}Y_{k,j}\left(\frac{\cdot}{\rho}\right)\right\rangle_{L_2(\Omega_\rho)}\frac{1}{\rho}Y_{k,j}\left(\frac{t}{\rho}\right).
\end{eqnarray*}
After expanding the right-hand side of \eqref{eq:10} using \eqref{eq:6},
and then equating the coefficients of
$Y_{k,j}\left(\frac{t}{\rho}\right)$, we obtain
\[
(\lambda+\beta_k^{-2})\left\langle p_{\min},Y_{k,j}\left(\frac{t}{\rho}\right)\right\rangle_{L^2(\Omega_\rho)}=\beta_k^{-2}\sum_{i=1}^N\omega_i Y_{k,j}\left(\frac{t}{\rho}\right)y^\epsilon(t_i),\; t\in\Omega_\rho,
\]
which on substituting into \eqref{eq:pmin} yields the desired result. 

\qed
\end{proof}

At this point the solution of the first step depends on the regularization
parameter $\lambda$, and the penalization weights $\beta_k$. As
mentioned in the Introduction, the choice of the regularization
parameter $\lambda$ will be addressed in the last section. A
data-driven choice of the penalization weights $\beta_k$ has been recently
discussed in \cite{P2014}.

The assumption that the function $x$ on $\Omega_R$ is continuous implies
that $x \in L^2(\Omega_R)$, and hence that its Fourier coefficients
$\left\langle
\frac{1}{R}Y_{k,j}\left(\frac{\cdot}{R}\right),x\right\rangle_{L_2(\Omega_R)}$
with respect to the basis of spherical harmonics are square-summable, i.e,

\[
\sum_{k=0}^\infty\sum_{j=1}^{2k+1}\left|\left\langle \frac{1}{R}Y_{k,j}\left(\frac{\cdot}{R}\right),x\right\rangle_{L_2(\Omega_R)}\right|^2<\infty.
\]
Any additional smoothness of $x$ can be measured in terms of the
summability of Fourier coefficients with some increasing weights depending
on the sequence $\left(\beta_k\right)$ or, as it is usual for the
regularization theory (see, e. g., \cite{L2013}), on the symbol
$\left(a_k\right)$. Therefore, it is convenient to introduce two
Sobolev spaces $W^{\phi,\beta}$, and $W^{\psi,a}$, the first
depending on the sequence $(\beta_k)$ and the second on the symbol
$(a_k)$, and  defined by

\begin{small}
\begin{eqnarray}
W^{\phi,\beta}:=\left\{g\in L^2(\Omega_R):\sum_{k=0}^\infty\sum_{j=1}^{2k+1}\frac{\left|\left\langle \frac{1}{R}Y_{k,j}\left(\frac{\cdot}{R}\right),g\right\rangle_{L_2(\Omega_R)}\right|^2}{\phi^2(\beta_k^{-2})}=:\|g\|_{W^{\phi,\beta}}^2<\infty\right\}, \label{eq:14}  \\
W^{\psi,a}:=\left\{g\in L^2(\Omega_R):\sum_{k=0}^\infty\sum_{j=1}^{2k+1}\frac{\left|\left\langle \frac{1}{R}Y_{k,j}\left(\frac{\cdot}{R}\right),g\right\rangle_{L_2(\Omega_R)}\right|^2}{\psi^2(a_k^2)}=:\|g\|_{W^{\psi,a}}^2<\infty\right\} \label{eq:15},
\end{eqnarray}
\end{small}
where $\phi, \psi$ are non-decreasing functions such that $\phi(0)=0$
and $\psi(0)=0$. In the literature, see, e.g., \cite{L2013}, the
functions $\phi, \psi$ go under the name of ``smoothness index
functions'', and are usually unknown.

\section{Regularized collocation method after noise reduction}

After the first step of our method the reduced original equation
(\ref{eq:3}) is transformed into the equation

\begin{equation}\label{eq:16}
A_M x=T_{\lambda,M}y^\epsilon.
\end{equation}
The regularized collocation method (see \cite{N2013}) is now
applied to this equation, yielding an approximate solution
$x_{\alpha,\lambda}^\epsilon$ of the equation (\ref{eq:16}),
defined as the minimizer of the functional

\begin{equation}\label{eq:17}
\left\|B_{N,M} x-S_NT_{\lambda,M}y^\epsilon\right\|_\omega^2+\alpha\left\|x\right\|^2_{L_2(\Omega_R)},
\end{equation}
where $B_{N,M}:=S_NA_M:L_2(\Omega_R)\rightarrow \mathbb{R}_\omega^N$, and
$\alpha\geq 0$ is the regularization parameter for the second step.
The minimizer of (\ref{eq:17}) can be written in the form

\begin{equation}\label{eq:18}
x_{\alpha,\lambda}^\epsilon=(\alpha I+B_{N,M}^*B_{N,M})^{-1}B_{N,M}^*S_NT_{\lambda,M}y^\epsilon,
\end{equation}
where $B_{N,M}^*:\mathbb{R}_\omega^N\rightarrow L_2(\Omega_R)$ is the
adjoint of $B_{N,M}$, given by

$$(B_{N,M}^*\eta)(\cdot)=\sum_{k=0}^M a_k\sum_{j=1}^{2k+1}\frac{1}{R}Y_{k,j}\left(\frac{\cdot}{R}\right)\sum_{i=1}^N \omega_i \frac{1}{\rho} Y_{k,j}\left(\frac{t_i}{\rho}\right)\eta_i,$$
from which it follows, using \eqref{eq:3}, that

\[
B_{N,M}^*B_{N,M} x=\sum_{k=0}^M a_k^2\sum_{j=1}^{2k+1}\frac{1}{R}Y_{k,j}\left(\frac{\cdot}{R}\right)\widehat{x}_{k,j}.
\]
Using (\ref{eq:18}) and (\ref{eq:8}) we then obtain explicitly

\begin{equation}\label{eq:19}
x_{\alpha,\lambda}^\epsilon:=\sum_{k=0}^M \frac{a_k}{\alpha+a_k^2}\frac{1}{1+\lambda\beta_k^2}  \sum_{j=1}^{2k+1}\frac{1}{R}Y_{k,j}\left(\frac{\cdot}{R}\right)\sum_{i=1}^N \omega_i\frac{1}{\rho}Y_{k,j}\left(\frac{t_i}{\rho}\right)y^\epsilon(t_i).
\end{equation}

If we now define
$R_{\alpha,M}:\mathbb{P}_{M}(\Omega_\rho)\rightarrow
\mathbb{P}_M(\Omega_R)$ as
\begin{eqnarray}
\left(R_{\alpha,M} p\right)(\cdot)&:=& \left( \left(\alpha I+B_{N,M}^{\ast}B_{N,M} \right)^{-1}B_{N,M}^{\ast}S_{N} p\right)(\cdot) \nonumber\\
&=&\sum_{k=0}^{M}\frac{a_{k}}{\alpha+a_{k}^{2}}\sum_{j=1}^{2k+1}\frac{1}{R}Y_{k,j}\left(\frac{\cdot}{R}\right)
\sum_{i=1}^{N}\omega_{i}\frac{1}{\rho}
Y_{k,j}\left(\frac{t_{i}}{\rho}\right) p(t_{i}) \label{eq:Ra}\\
&=&\sum_{k=0}^{M}\frac{a_{k}}{\alpha+a_{k}^{2}}\sum_{j=1}^{2k+1}\frac{1}{R}Y_{k,j}\left(\frac{\cdot}{R}\right)
\left\langle\frac{1}{\rho}Y_{k,j},p\right\rangle_{L_2(\Omega_\rho)}.
\end{eqnarray}
then we can write
$x_{\alpha,\lambda}^\epsilon=R_{\alpha,M}T_{\lambda,M}y^\epsilon$.
Thus $R_{\alpha,M}$ reflects the regularized collocation second step
of the method, whereas $T_{\lambda,M}$ corresponds to the noise reduction
first step.

As one can see from (\ref{eq:19}), in the end we approximate the
unknown solution $x$ on $\Omega_R$ by means of a spherical
polynomial of degree $M$. In this context a question arises about the
best approximation of a continuous function $x$ by means of a
polynomial of degree $M$. In turn the quality of best polynomial
approximation is determined by the smoothness of $x$.  We shall assume
that $x$ lies in the intersection of $W^{\phi,\beta}$ and $W^{\psi,a}$,
see (\ref{eq:14}) or (\ref{eq:15}), thus the smoothness of $x$ is encoded
in $\phi$ and $\beta_k$, or $\psi$ and $a_k$. For example, if the
smoothness index function $\phi(t)$ and the sequence
$\beta=\left\{\beta_k\right\}$ increase polynomially with $t$ and $k$,
then Jackson's theorem on the sphere (see \cite{R1971}, Theorem 3.3) tells
us that for $x\in W^{\phi,\beta}$ there is $\nu>0$ such that

\begin{equation}\label{eq:20}
\inf_{p\in\mathbb{P}_M}\left\|x-p\right\|_{C(\Omega_R)}=O\left(M^{-\nu}\right).
\end{equation}

At the same time, if the sequence $\beta=\left\{\beta_k\right\}$ increases exponentially then for polynomially increasing $\phi$ and $x\in W^{\phi,\beta}$ we have

\[
\inf_{p\in\mathbb{P}_M}\left\|x-p\right\|_{C(\Omega_R)}=O\left(e^{-qM}\right),
	\]
where $q$ is some positive number that does not depend on $M$.

In the error analysis of the next section we shall make use of a
constructive polynomial approximation introduced in \cite{S2011}, in which
$x\in C(\Omega_R)$ is approximated by $V_M x\in\mathbb{P}_M(\Omega_R)$,
given by

\begin{equation}\label{VMx}
\left(V_Mx\right)(t)=\sum_{k=0}^M h\left(\frac{k}{M}\right)\sum_{j=1}^{2k+1}\frac{1}{R}Y_{k,j}\left(\frac{t}{R}\right)\left\langle \frac{1}{R} Y_{k,j}\left(\frac{\cdot}{R}\right),x\right\rangle_{L^2(\Omega_R)}, \quad t\in\Omega_R,
\end{equation}	
where $h$ (a ``filter function'') is a continuously differentiable
function on $\mathbb{R}^+$ satisfying

\[
h(t) = \left\{
  \begin{array}{l l}
    1, & \quad t\in\left[0,1/2\right],\\
    0, & \quad t\in\left[1,\infty\right),
  \end{array}
  \quad 0\le h(t)\le 1\; \mbox{for} \, t\in \mathbb{R^+}\right .
\]
Explicit  examples of suitable filter functions $h$ can be found
in \cite{S2012}. It is important for our later analysis that, as shown
in \cite{S2011}, $V_M x$ is (up to a constant) an optimal approximation in
the uniform norm, in the sense that,
\begin{equation}\label{eq:best}
\left\|x-V_Mx\right\|_{C(\Omega_R)}\leq c\inf_{p\in\mathbb{P}_{\left[M/2\right]}}\left\|x-p\right\|_{C(\Omega_R)},
\end{equation}
where $c$ is a generic constant, which may take different values at
different occurrences, and $\left[\cdot\right]$ denotes the floor
function.

In view of (\ref{eq:20}), for polynomially increasing $\phi, \beta$ and
for $x\in W^{\phi,\beta}$ we have

\[
\left\|x-V_Mx\right\|_{C(\Omega_R)}\leq c\left[M/2\right]^{-\nu}\leq cM^{-\nu}.
\]
On the other hand, for exponentially increasing $\beta$ and polynomially
increasing $\phi$ the theory \cite{R2000} suggests taking $h(t)\equiv
1, t\in\left[0,1\right]$. In this case the right-hand side of
\eqref{eq:best} has to be modified by multiplying by $\sqrt{M}$, and by
replacing $[M/2]$ by $M$, thus for $x\in W^{\phi,\beta}$ there holds

\[
\left\|x-V_Mx\right\|_{C(\Omega_R)}\leq c\sqrt{M}\inf_{p\in\mathbb{P}_M}\left\|x-p\right\|_{C(\Omega_R)}\leq c\sqrt{M}e^{-qM}.
	\]

	\section{Error estimation in uniform norm}

In this section we will estimate the uniform error of
approximation of $x$ by the polynomial $x^\epsilon_{\alpha,\lambda}$
given by (\ref{eq:19}).  It is clear that

\begin{small}
\begin{eqnarray}\label{eq:21}
\left\|x-x^\epsilon_{\alpha,\lambda}\right\|_{C(\Omega_R)}&=&\left\|x-R_{\alpha,M}T_{\lambda,M}y^\epsilon\right\|_{C(\Omega_R)}\\ \nonumber
&\leq& \left\|x-V_Mx\right\|_{C(\Omega_R)}
+\left\|V_Mx-R_{\alpha,M}U_My\right\|_{C(\Omega_R)}   \\ \nonumber
&+&\left\|(R_{\alpha,M}-R_{\alpha,M}T_{\lambda,M})U_My\right\|_{C(\Omega_R)}  \\ \nonumber
&+&\left\|R_{\alpha,M}T_{\lambda,M}\right\|_{C(\Omega_\rho)\rightarrow C(\Omega_R)}\left(\left\|U_My-y\right\|_{C(\Omega_\rho)}+\left\|y-y^\epsilon\right\|_{C(\Omega_\rho)}\right),
\end{eqnarray}
\end{small}
where $U_My$ is the same as $V_M x$ in \eqref{VMx} except that it is
an approximation on the larger sphere $\Omega_{\rho}$ instead of on
$\Omega_R$,

\begin{equation}\label{eq:UMy}
\left(U_My\right)(t)=\sum_{k=0}^M h\left(\frac{k}{M}\right)\sum_{j=1}^{2k+1}\frac{1}{\rho^2}Y_{k,j}\left(\frac{t}{\rho}\right)\left\langle Y_{k,j}\left(\frac{\cdot}{\rho}\right),y(\cdot)\right\rangle_{L^2(\Omega_\rho)}.
\end{equation}
It is natural to assume that
$\left\|U_My-y\right\|_{C(\Omega_\rho)}<\epsilon$, since otherwise data
noise is dominated by the approximation error and no regularization is
required. We also restrict ourselves to the case when
$\left\|x-V_Mx\right\|_{C(\Omega_R)}<c\epsilon$, otherwise the term
$\left\|x-V_Mx\right\|_{C(\Omega_R)}$, representing the error of
almost best polynomial approximation, will dominate the error bound.

Then the bound (\ref{eq:21}) can be reduced to the following one

\begin{eqnarray}\label{eq:22}
&&\left\|x-x^\epsilon_{\alpha,\lambda}\right\|_{C(\Omega_R)}\leq \left\|V_Mx-R_{\alpha,M}U_My\right\|_{C(\Omega_R)}   \\ \nonumber
&&+\left\|(R_{\alpha,M}-R_{\alpha,M}T_{\lambda,M})U_My\right\|_{C(\Omega_R)}+c(1+\left\|R_{\alpha,M}T_{\lambda,M}\right\|_{C(\Omega_\rho)\rightarrow C(\Omega_R)})\epsilon.
\end{eqnarray}


An estimate of the coefficient in the last term is given by the
following theorem.

\begin{theorem} Under the conditions of Theorem 1

\begin{scriptsize}
\[
\left\|R_{\alpha,M}T_{\lambda,M}\right\|_{C(\Omega_\rho)\rightarrow C(\Omega_R)} \leq  \frac{1}{R\rho}\max_{t\in\Omega_R}\left|\sum_{i=1}^{N}\omega_{i} \sum_{k=0}^M\frac{(2k+1)a_k}{4\pi(\alpha+a_k^2)(1+\lambda\beta_k^2)}   P_k\left(\frac{t \cdot t_{i}}{R\rho}\right)\right|,
\]
\end{scriptsize}
where $P_k$ are the Legendre polynomials of degree $k$.
\end{theorem}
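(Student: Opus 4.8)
The plan is to write the composed operator $R_{\alpha,M}T_{\lambda,M}$ acting on an arbitrary $g\in C(\Omega_\rho)$ in closed form, collapse the inner sum over $j$ into a Legendre polynomial via the addition theorem, and then read off the $C(\Omega_\rho)\to C(\Omega_R)$ operator norm from the resulting scalar kernel. Since $x_{\alpha,\lambda}^\epsilon=R_{\alpha,M}T_{\lambda,M}y^\epsilon$, formula \eqref{eq:19} already gives the composition applied to $y^\epsilon$; replacing $y^\epsilon$ by a general $g$ and interchanging the (finite) sums over $i$ and over $k,j$, I would first record
\[
(R_{\alpha,M}T_{\lambda,M}g)(t)=\sum_{i=1}^N\omega_i g(t_i)\sum_{k=0}^M\frac{a_k}{(\alpha+a_k^2)(1+\lambda\beta_k^2)}\,\frac{1}{R\rho}\sum_{j=1}^{2k+1}Y_{k,j}\Bigl(\frac{t}{R}\Bigr)Y_{k,j}\Bigl(\frac{t_i}{\rho}\Bigr).
\]

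Next I would collapse the inner sum over $j$ using the addition theorem for spherical harmonics, $\sum_{j=1}^{2k+1}Y_{k,j}(\xi)Y_{k,j}(\eta)=\frac{2k+1}{4\pi}P_k(\xi\cdot\eta)$. Because $t\in\Omega_R$ and $t_i\in\Omega_\rho$, the arguments $\xi=t/R$ and $\eta=t_i/\rho$ lie on the unit sphere and satisfy $\xi\cdot\eta=(t\cdot t_i)/(R\rho)$, so the $j$-sum becomes $\tfrac{2k+1}{4\pi}P_k\!\bigl(\tfrac{t\cdot t_i}{R\rho}\bigr)$. This turns the representation into a single sum over the cubature nodes $t_i$, weighted by the scalar kernel $\frac{1}{R\rho}\sum_{k=0}^M\frac{(2k+1)a_k}{4\pi(\alpha+a_k^2)(1+\lambda\beta_k^2)}P_k\!\bigl(\tfrac{t\cdot t_i}{R\rho}\bigr)$, which is exactly the quantity appearing in the claimed bound. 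The steps so far are routine: the sum interchange is legitimate because everything is finite, and the only care needed is bookkeeping of the constants $1/(R\rho)$ and $(2k+1)/(4\pi)$.

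It then remains to pass from this pointwise identity to the operator norm. Bounding $|g(t_i)|\le\|g\|_{C(\Omega_\rho)}$, factoring this out, and using that the cubature weights $\omega_i$ are positive, the triangle inequality gives, for every $t\in\Omega_R$,
\[
|(R_{\alpha,M}T_{\lambda,M}g)(t)|\le \frac{\|g\|_{C(\Omega_\rho)}}{R\rho}\sum_{i=1}^N\omega_i\Bigl|\sum_{k=0}^M\frac{(2k+1)a_k}{4\pi(\alpha+a_k^2)(1+\lambda\beta_k^2)}P_k\!\Bigl(\tfrac{t\cdot t_i}{R\rho}\Bigr)\Bigr|.
\]
Taking the maximum over $t\in\Omega_R$ on the left and dividing by $\|g\|_{C(\Omega_\rho)}$ then yields the asserted estimate.

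The one genuine point to get right is this last norm step. Since $R_{\alpha,M}T_{\lambda,M}g$ depends on $g$ only through the samples $g(t_1),\dots,g(t_N)$, and these values can be prescribed arbitrarily in $[-1,1]$ by some $g$ in the unit ball of $C(\Omega_\rho)$, the triangle-inequality bound above is in fact \emph{attained}: choosing $g(t_i)$ equal to the sign of the corresponding node kernel at the maximizing $t$ realizes the supremum. This both confirms that the estimate is the correct (sharp) operator norm and explains the exact placement of the absolute value, which sits inside the sum over the nodes $i$, with the positive weights $\omega_i$ left outside.
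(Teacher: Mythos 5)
Your computation of the composed operator is, in substance, the paper's entire proof: the paper likewise writes out $R_{\alpha,M}T_{\lambda,M}$ from \eqref{eq:8} and \eqref{eq:Ra} (equivalently from \eqref{eq:19}), collapses the sum over $j$ by the addition theorem, arrives at
\[
\left(R_{\alpha,M}T_{\lambda,M}g\right)(t)=\frac{1}{R\rho}\sum_{i=1}^{N}\omega_{i}\,g(t_i)\sum_{k=0}^{M}\frac{(2k+1)a_k}{4\pi(\alpha+a_k^2)(1+\lambda\beta_k^2)}P_k\!\left(\frac{t\cdot t_i}{R\rho}\right),
\]
and then stops, leaving the passage from this pointwise identity to the operator norm entirely implicit. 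You carry out that final step explicitly and add the (correct) observation that your bound is attained, since the finitely many values $g(t_i)$ can be prescribed arbitrarily within the unit ball of $C(\Omega_\rho)$. So your argument is not only the same route as the paper's but is more complete than it.

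There is, however, a mismatch you should not paper over: the quantity you end up with is
\[
\frac{1}{R\rho}\max_{t\in\Omega_R}\sum_{i=1}^{N}\omega_{i}\left|\sum_{k=0}^{M}\frac{(2k+1)a_k}{4\pi(\alpha+a_k^2)(1+\lambda\beta_k^2)}P_k\!\left(\frac{t\cdot t_i}{R\rho}\right)\right|,
\]
with the absolute value \emph{inside} the node sum, whereas the theorem as printed puts the absolute value \emph{outside}, i.e.\ $\max_t\bigl|\sum_i\omega_i\sum_k\cdots\bigr|$. These are different quantities, and your closing claim that your placement matches ``the claimed bound'' is a misreading of the statement. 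The discrepancy matters: by your own sharpness argument the operator norm \emph{equals} your expression, while the printed expression equals $\|R_{\alpha,M}T_{\lambda,M}\mathbf{1}\|_{C(\Omega_R)}$ (take $g\equiv 1$), which is a \emph{lower} bound for the operator norm; the printed inequality can therefore hold only in the special case where the node kernels all share one sign at a maximizing $t$, so that the two expressions coincide. In other words, what you have proved is the corrected form of the theorem (and the mathematically defensible one); the printed statement, and likewise the placement of absolute values in \eqref{eq:23}, cannot be established by this — or any — argument in general. It would strengthen your write-up to say this explicitly rather than to assert agreement with the stated bound.
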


\begin{proof}

In view of the definition (\ref{eq:8}) and (\ref{eq:Ra}), together
with \eqref{eq:orthog} with $R$ replaced by $\rho$ and the property
\eqref{eq:5} of the cubature rule, we can write
\begin{eqnarray*}
&& \left(R_{\alpha,M}T_{\lambda,M} y\right)(t)\\
&=&\sum_{k=0}^M \frac{a_k}{\alpha+a_k^2}\frac{1}{1+\lambda\beta_k^2}  \sum_{j=1}^{2k+1}\frac{1}{R}Y_{k,j}\left(\frac{t}{R}\right)\sum_{i=1}^N \omega_i\frac{1}{\rho}Y_{k,j}\left(\frac{t_i}{\rho}\right)y(t_i)\\
&=& \frac{1}{R\rho}
 \sum_{k=0}^{M}\frac{2k+1}{4\pi}  \frac{a_{k}}{\alpha+a_{k}^{2}}\frac{1}{1+\lambda\beta_{k}^{2}}
\sum_{i=1}^{N} \omega_{i}P_{k}\left(\frac{t \cdot t_{i}}{R\rho}\right)y(t_i),  \;\; \forall t\in \Omega_{R},
\end{eqnarray*}
where in the last step we used the addition theorem for the spherical
harmonics \cite{M1966}
\[
\sum_{j=1}^{2k+1}Y_{k,j}(x)Y_{k,j}(y)=\frac{2k+1}{4\pi}P_k(x\cdot y), \quad x,y\in\Omega_1.
\]
\qed
\end{proof}

Finally, we estimate the first two terms of the error bound (\ref{eq:22}).

\begin{theorem} Assume that the smoothness index functions $\phi, \psi$ are such that the functions $t/\phi(t)$,
and $t/\psi(t)$ are monotone. Then for $x\in W^{\phi,\beta}\bigcap
W^{\psi,a}$ and $Ax=y$ we have

\begin{eqnarray*}
\left\|V_Mx-R_{\alpha,M}U_My\right\|_{C(\Omega_R)}&+&\left\|(R_{\alpha,M}-R_{\alpha,M}T_{\lambda,M})U_My\right\|_{C(\Omega_R)} \\
&\leq&cM[\widehat{\psi}(\alpha)\left\|x\right\|_{W^{\psi,a}}+\widehat{\phi}(\lambda)\left\|x\right\|_{W^{\phi,\beta}}],
\end{eqnarray*}
where $\widehat{\phi}(\lambda)=\phi(\lambda),
\widehat{\psi}(\alpha)=\psi(\alpha)$, if $t/\phi(t)$ and $t/\psi(t)$ are
non-decreasing, and $\widehat{\phi}(\lambda)=\lambda,
\widehat{\psi}(\alpha)=\alpha$ otherwise.

\end{theorem}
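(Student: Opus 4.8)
The plan is to make each difference explicit as a spherical-harmonic series in the Fourier coefficients $\widehat{x}_{k,j}$ of $x$, and then to estimate the resulting degree-$M$ polynomials in the uniform norm. Since $Ax=y$, formula (\ref{eq:2}) gives $\langle\frac{1}{\rho}Y_{k,j}(\cdot/\rho),y\rangle_{L^2(\Omega_\rho)}=a_k\widehat{x}_{k,j}$, so by orthonormality the $\Omega_\rho$-Fourier coefficients of $U_My$ are $h(k/M)a_k\widehat{x}_{k,j}$. Substituting these into (\ref{eq:Ra}) and comparing with (\ref{VMx}) yields
\[(V_Mx-R_{\alpha,M}U_My)(t)=\sum_{k=0}^M h\!\left(\tfrac{k}{M}\right)\frac{\alpha}{\alpha+a_k^2}\sum_{j=1}^{2k+1}\frac{1}{R}Y_{k,j}\!\left(\tfrac{t}{R}\right)\widehat{x}_{k,j}.\]
For the second difference the decisive point is that $U_My\in\mathbb{P}_M(\Omega_\rho)$, so each product $\frac{1}{\rho}Y_{k,j}(\cdot/\rho)\,U_My$ has degree at most $2M$ and the cubature sums occurring in (\ref{eq:8}) are therefore \emph{exact} by (\ref{eq:5}); this makes $T_{\lambda,M}U_My$ the polynomial whose $\Omega_\rho$-coefficients are $\frac{1}{1+\lambda\beta_k^2}h(k/M)a_k\widehat{x}_{k,j}$. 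Subtracting from $U_My$ and then applying $R_{\alpha,M}$, whose symbol contributes a further factor $\frac{a_k}{\alpha+a_k^2}$, I obtain
\[\bigl((R_{\alpha,M}-R_{\alpha,M}T_{\lambda,M})U_My\bigr)(t)=\sum_{k=0}^M h\!\left(\tfrac{k}{M}\right)\frac{a_k^2}{\alpha+a_k^2}\,\frac{\lambda\beta_k^2}{1+\lambda\beta_k^2}\sum_{j=1}^{2k+1}\frac{1}{R}Y_{k,j}\!\left(\tfrac{t}{R}\right)\widehat{x}_{k,j}.\]

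Both right-hand sides have the form $q(t)=\sum_{k=0}^M c_k\sum_j \frac{1}{R}Y_{k,j}(t/R)\widehat{x}_{k,j}$, and for these I would estimate $\|q\|_{C(\Omega_R)}$ by two applications of Cauchy--Schwarz. The inner sum over $j$ is bounded, for every $t\in\Omega_R$, by $\frac{1}{R}(\sum_j|Y_{k,j}(t/R)|^2)^{1/2}(\sum_j|\widehat{x}_{k,j}|^2)^{1/2}$, and the addition theorem gives $\sum_j|Y_{k,j}(\cdot)|^2=\frac{2k+1}{4\pi}$ on $\Omega_1$, uniformly in $t$. Factoring $\bigl(\sum_j|\widehat{x}_{k,j}|^2\bigr)^{1/2}=\psi(a_k^2)\bigl(\sum_j|\widehat{x}_{k,j}|^2/\psi^2(a_k^2)\bigr)^{1/2}$ for the first difference (and analogously with $\phi(\beta_k^{-2})$ for the second) and then applying Cauchy--Schwarz in $k$ produces the factor $\bigl(\sum_{k=0}^M\frac{2k+1}{4\pi}\bigr)^{1/2}=\frac{M+1}{\sqrt{4\pi}}\le cM$, the norm $\|x\|_{W^{\psi,a}}$ resp.\ $\|x\|_{W^{\phi,\beta}}$, and the supremum over $k$ of the remaining multiplier. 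This is precisely where the prefactor $cM$ in the claimed bound comes from.

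What remains, and what I expect to be the crux, is bounding those multiplier suprema --- the usual qualification analysis of Tikhonov-type regularization under general source conditions. For the first difference I must control $\sup_s \frac{\alpha\psi(s)}{\alpha+s}$ with $s=a_k^2$, and for the second $\sup_u \frac{\lambda\phi(u)}{\lambda+u}$ with $u=\beta_k^{-2}$ (the extra factor $\frac{a_k^2}{\alpha+a_k^2}\le1$ being harmless). If $t/\psi(t)$ is non-decreasing I split at $s=\alpha$: for $s\le\alpha$ one has $\frac{\alpha\psi(s)}{\alpha+s}\le\psi(s)\le\psi(\alpha)$, while for $s>\alpha$ the non-increasing ratio $\psi(s)/s$ gives $\frac{\alpha\psi(s)}{\alpha+s}\le\alpha\,\psi(s)/s\le\psi(\alpha)$, so the supremum is $\widehat\psi(\alpha)=\psi(\alpha)$. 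In the opposite, supercritical case $\psi(s)/s$ is non-decreasing, and here I would use the boundedness of the symbol, $s=a_k^2\le a_0^2$, to write $\frac{\alpha\psi(s)}{\alpha+s}\le\alpha\,\psi(s)/s\le\alpha\,\psi(a_0^2)/a_0^2=c\,\widehat\psi(\alpha)$ with $\widehat\psi(\alpha)=\alpha$. The $\phi$-multiplier is handled verbatim with $\lambda,\,u=\beta_k^{-2}$ in place of $\alpha,\,s$, the boundedness $\beta_k^{-2}\le\beta_0^{-2}$ (from $\beta_k\ge\beta_0>0$ non-decreasing) replacing $a_k^2\le a_0^2$. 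Adding the two estimates gives the asserted bound. The delicate points are this supercritical branch --- the only place where boundedness of the spectra $\{a_k^2\}$, $\{\beta_k^{-2}\}$ is invoked, and the one responsible for the switch to $\widehat\psi(\alpha)=\alpha$, $\widehat\phi(\lambda)=\lambda$ --- together with the verification that $\deg\bigl(\frac{1}{\rho}Y_{k,j}(\cdot/\rho)\,U_My\bigr)\le 2M$, which is what licenses the exactness step in the first paragraph.
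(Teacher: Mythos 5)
Your proof is correct, and its skeleton is the same as the paper's: both use $Ax=y$ to identify the $\Omega_\rho$-Fourier coefficients of $U_My$ as $h(k/M)\,a_k\widehat{x}_{k,j}$, both use the exactness (\ref{eq:5}) of the cubature rule on products of degree at most $2M$ to turn the discrete sums in (\ref{eq:8}) and (\ref{eq:Ra}) into $L^2(\Omega_\rho)$ inner products (yielding the same explicit multiplier representations of the two differences), and both finish with the identical two-case analysis of $\sup_u \alpha\psi(u)/(\alpha+u)$, which the paper reproduces from Proposition 2.7 of \cite{L2013}. The genuine difference is how the factor $cM$ arises: the paper applies the Nikolskii inequality $\|p\|_{C(\Omega_R)}\le cM\|p\|_{L^2(\Omega_R)}$ for $p\in\mathbb{P}_M(\Omega_R)$ (citing \cite{N2006}) and then Parseval, while you prove the required bound from scratch by Cauchy--Schwarz in $j$ and in $k$ combined with the addition theorem, obtaining the explicit constant $(M+1)/(2\sqrt{\pi})$; this is essentially a self-contained proof of the needed Nikolskii-type estimate, so nothing is lost. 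Your ordering even has a small technical advantage: you retain the filter factor $h(k/M)$ until it can be discarded at the level of termwise suprema, where $0\le h\le 1$ applies legitimately, whereas the paper discards $h$ coefficientwise while still inside the $C(\Omega_R)$-norm --- a step that is only unambiguously justified after passing to $L^2$ via Parseval. Finally, you spell out the second difference explicitly (multiplier $\frac{a_k^2}{\alpha+a_k^2}\cdot\frac{\lambda\beta_k^2}{1+\lambda\beta_k^2}$, substitution $u=\beta_k^{-2}$, and the boundedness $\beta_k^{-2}\le\beta_0^{-2}$ driving the saturation case $\widehat{\phi}(\lambda)=\lambda$, $\widehat{\psi}(\alpha)=\alpha$) where the paper only gestures at ``similar steps''; that is exactly the intended argument.
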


\begin{proof}

It follows easily from \eqref{eq:2} and \eqref{eq:orthog} that
\[
a_k\left\langle\frac{1}{R}Y_{k,j}\left(\frac{\cdot}{R}\right),x\right\rangle_{L_2(\Omega_R)}=\left\langle\frac{1}{\rho}Y_{k,j}\left(\frac{\cdot}{\rho}\right),y\right\rangle_{L_2(\Omega_\rho)}.
\]
Hence from \eqref{VMx} and \eqref{eq:UMy} we obtain
\begin{eqnarray*}
\left(V_Mx\right)(t)&=&\sum_{k=0}^M h\left(\frac{k}{M}\right)\frac{1}{a_k}\sum_{j=1}^{2k+1}\frac{1}{R}Y_{k,j}\left(\frac{t}{R}\right)\left\langle \frac{1}{\rho}Y_{k,j}\left(\frac{\cdot}{\rho}\right),y\right\rangle_{L^2(\Omega_\rho)} \\
&=&\sum_{k=0}^M \frac{1}{a_k}\sum_{j=1}^{2k+1}\frac{1}{R}Y_{k,j}\left(\frac{t}{R}\right)\left\langle \frac{1}{\rho}Y_{k,j}\left(\frac{\cdot}{\rho}\right),\left(U_My\right)\right\rangle_{L^2(\Omega_\rho)}.
\end{eqnarray*}

Moreover, in view of \eqref{eq:22}, \eqref{eq:UMy} and (\ref{eq:5})
we also have the representations

\begin{small}
\begin{eqnarray*}
\left(R_{\alpha,M}U_My\right)(t)&=&\sum_{k=0}^M \frac{a_k}{\alpha+a_k^2}\sum_{j=1}^{2k+1}\frac{1}{R}Y_{k,j}\left(\frac{t}{R}\right)\sum_{i=1}^N \omega_i\frac{1}{\rho}Y_{k,j}\left(\frac{t_i}{\rho}\right)\left(U_My\right)(t_i) \\
&=&\sum_{k=0}^M \frac{a_k}{\alpha+a_k^2}\sum_{j=1}^{2k+1}\frac{1}{R}Y_{k,j}\left(\frac{t}{R}\right)\left\langle \frac{1}{\rho}Y_{k,j}\left(\frac{\cdot}{\rho}\right),\left(U_My\right)(\cdot)\right\rangle_{L^2(\Omega_\rho)},
\end{eqnarray*}
\end{small}
while from \eqref{eq:inner} and \eqref{eq:orthog} and two uses of
\eqref{eq:3} we obtain

\begin{scriptsize}
\begin{eqnarray*}
\left(R_{\alpha,M}T_{\lambda,M}U_My\right)(t)&=&\sum_{k=0}^M \frac{a_k}{\alpha+a_k^2}\frac{1}{1+\lambda\beta_k^2}\sum_{j=1}^{2k+1}\frac{1}{R}Y_{k,j}\left(\frac{t}{R}\right)\sum_{i=1}^N \omega_i\frac{1}{\rho}Y_{k,j}\left(\frac{t_i}{\rho}\right)\left(U_My\right)(t_i) \\
&=&\sum_{k=0}^M \frac{a_k}{\alpha+a_k^2}\frac{1}{1+\lambda\beta_k^2}\sum_{j=1}^{2k+1}\frac{1}{R}Y_{k,j}\left(\frac{t}{R}\right)\left\langle \frac{1}{\rho}Y_{k,j}\left(\frac{\cdot}{\rho}\right),\left(U_My\right)(\cdot)\right\rangle_{L^2(\Omega_\rho)}.
\end{eqnarray*}
\end{scriptsize}

Then we can write, using again \eqref{eq:UMy},

\begin{eqnarray*}
&&\left\|V_Mx-R_{\alpha,M}U_My\right\|_{C(\Omega_R)} \\
&=&\left\|\sum_{k=0}^M \left(\frac{1}{a_k}-\frac{a_k}{\alpha+a_k^2}\right)\sum_{j=1}^{2k+1}\frac{1}{R}Y_{k,j}\left(\frac{t}{R}\right)\left\langle \frac{1}{\rho}Y_{k,j}\left(\frac{\cdot}{\rho}\right),U_My\right\rangle_{L^2(\Omega_\rho)}\right\|_{C(\Omega_R)} \\
&=&\left\|\sum_{k=0}^M  h\left(\frac{k}{M}\right)\frac{\alpha}{a_k(\alpha+a_k^2)} \sum_{j=1}^{2k+1}\frac{1}{R}Y_{k,j}\left(\frac{t}{R}\right)\left\langle \frac{1}{\rho}Y_{k,j}\left(\frac{\cdot}{\rho}\right),y \right\rangle_{L^2(\Omega_\rho)}\right\|_{C(\Omega_R)}.
\end{eqnarray*}

Now using the property that $0 \leq h\left(\frac{k}{M}\right)\leq 1$ and
then the Nikolskii inequality (see, e.g., \cite{N2006},
Proposition 2.5), we obtain, for $x\in W^{\psi,a}$,

\begin{eqnarray*}
&&\left\|V_Mx-R_{\alpha,M}U_My\right\|_{C(\Omega_R)} \\
&\leq& \left\|\sum_{k=0}^M \frac{\alpha}{\alpha+a_k^2} \sum_{j=1}^{2k+1}\frac{1}{R}Y_{k,j}\left(\frac{t}{R}\right) \left\langle \frac{1}{R}Y_{k,j}\left(\frac{\cdot}{R}\right),x\right\rangle_{L^2(\Omega_R)}\right\|_{C(\Omega_R)} \\
&\leq& cM\left\|\sum_{k=0}^M \frac{\alpha}{\alpha+a_k^2} \sum_{j=1}^{2k+1}\frac{1}{R}Y_{k,j}\left(\frac{t}{R}\right) \left\langle \frac{1}{R}Y_{k,j}\left(\frac{\cdot}{R}\right),x\right\rangle_{L^2(\Omega_R)}\right\|_{L^2(\Omega_R)}\\
&= & cM\left\|\sum_{k=0}^M \frac{\alpha}{\alpha+a_k^2} \psi(a_{k}^{2}) \sum_{j=1}^{2k+1} \frac{\frac{1}{R}Y_{k,j}\left(\frac{t}{R}\right)\left\langle \frac{1}{R}Y_{k,j}\left(\frac{\cdot}{R}\right),x(\cdot)\right\rangle_{L^2(\Omega_R)}}{\psi(a_{k}^{2}) }\right\|_{L^2(\Omega_R)}\\
&\leq& cM\sup_{u\leq a_1^2} \left|\frac{\alpha}{\alpha+u}\psi(u)\right|\left\|x\right\|_{W^{\psi,a}}.
\end{eqnarray*}	

The proof can be completed by using Proposition 2.7 from \cite{L2013}. For convenience, we present the corresponding argument here. 

Let us consider two cases. First, we assume that $t/\psi(t)$ is a non-decreasing function. Then keeping in mind that the function $\psi$ is increasing, for $0<u<\alpha$ we obtain

$$\sup_{u<\alpha} \left|\frac{\alpha}{\alpha+u}\psi(u)\right|\leq \sup_{u<\alpha} \psi(u)< \psi(\alpha).$$ 

If $\alpha\leq u$ we can write 

\begin{eqnarray*}
\sup_{\alpha\leq u} \left|\frac{\alpha}{\alpha+u}\psi(u)\right|&\leq&\sup_{\alpha\leq u} \left|\frac{\alpha u}{\alpha+u}\frac{\psi(u)}{u}\right|\leq\sup_{\alpha\leq u} \left|\frac{\alpha u}{\alpha+u}\right| \sup_{\alpha\leq u} \left|\frac{\psi(u)}{u}\right| \\
&\leq& \frac{\alpha}{\inf_{\alpha\leq u}\frac{u}{\psi(u)}}\leq \frac{\alpha}{\frac{\alpha}{\psi(\alpha)}}=\psi(\alpha).
\end{eqnarray*}

Assume now that $t/\psi(t)$ is a decreasing function. Then

$$\sup_{u\leq a_1^2} \left|\frac{\alpha}{\alpha+u}\psi(u)\right|\leq \sup_{u\leq a_1^2} \left|\frac{\alpha u}{\alpha+u}\frac{\psi(u)}{u}\right|\leq \alpha\frac{\psi(a_1^2)}{a_1^2}\leq c\alpha.$$

Combining everything together we obtain

$$\left\|V_Mx-R_{\alpha,M}U_My\right\|_{C(\Omega_R)}\leq cM\widehat{\psi}(\alpha)\left\|x\right\|_{W^{\psi,a}}. $$ 

The second part of the regularization error can be bounded by
applying similar steps and using the assumption of the theorem that $x\in
W^{\phi,\beta}$. \qed
\end{proof}

From \eqref{eq:22} and Theorems 2 and 3 we can estimate the error
of the approximation of $x$ with the use of the proposed two-step
regularization scheme as follows:

\begin{eqnarray}\label{eq:23}
\left\|x-x^\epsilon_{\alpha,\lambda}\right\|_{C(\Omega_R)}&\leq& c
M[\widehat{\phi}(\lambda)\left\|x\right\|_{W^{\phi,\beta}}+\widehat{\psi}(\alpha)\left\|x\right\|_{W^{\psi,a}}]
\\ \nonumber
&+&c\max_{t\in\Omega_R}\sum_{k=0}^M\frac{(2k+1)a_k}{4\pi(a_k^2+\alpha)(1+\lambda\beta_k^2)}
\frac{1}{R\rho}\left|\sum_{i=1}^N \omega_i P_{k}\left(\frac{t\cdot
t_i}{R\rho}\right)\right| \epsilon.
\end{eqnarray}

From the error decomposition (\ref{eq:23}) one can easily obtain the error
bounds for the single parameter regularization schemes involved in the
two-step combination. If $\lambda=0$, which means that no noise
reduction has been done, we derive the following error bound in the
uniform norm:

\begin{eqnarray}\label{eq:24}
\left\|x-x^\epsilon_{\alpha,\lambda}\right\|_{C(\Omega_R)}&\leq& cM\widehat{\psi}(\alpha)\left\|x\right\|_{W^{\psi,a}} \\ \nonumber
&+&c\max_{t\in\Omega_R}\sum_{k=0}^M\frac{(2k+1)a_k}{(\alpha+a_k^2)}\frac{1}{R\rho}\left|\sum_{i=1}^N \omega_i P_{k}\left(\frac{t\cdot t_i}{R\rho}\right)\right| \epsilon.
\end{eqnarray}

 On the other hand, if $\alpha=0$, which means that direct inversion
after data pre-smoothing has been done, then

\begin{eqnarray}\label{eq:25}
\left\|x-x^\epsilon_{\alpha,\lambda}\right\|_{C(\Omega_R)}&\leq& cM\widehat{\phi}(\lambda)\left\|x\right\|_{W^{\phi,\beta}} \\ \nonumber
&+&c\max_{t\in\Omega_R}\sum_{k=0}^M\frac{2k+1}{4\pi a_k(1+\lambda\beta_k^2)}\frac{1}{R\rho}\left|\sum_{i=1}^N \omega_i P_{k}\left(\frac{t\cdot t_i}{R\rho}\right)\right|\epsilon.
\end{eqnarray}

Now we illustrate the potential advantage of the two-step
approximation compared to the single-parameter schemes. Assume that 
$\phi(t)=t^\mu$, and $\psi(t)=t^\nu$ for some $\mu,\nu \in (0,1)$. In
this case the error bound (\ref{eq:23}) is reduced to the following one

\begin{eqnarray}\label{eq:26}
\left\|x-x^\epsilon_{\alpha,\lambda}\right\|_{C(\Omega_R)}&\leq& cM(\lambda^\mu\left\|x\right\|_{W^{\phi,\beta}}+\alpha^\nu\left\|x\right\|_{W^{\psi,a}}) \\ \nonumber
&+&c \max_{t\in\Omega_R}\sum_{k=0}^M\frac{(2k+1)a_k}{(\alpha+a_k^2)(1+\lambda\beta_k^2)}\frac{1}{R\rho}\left|\sum_{i=1}^N \omega_i P_{k}\left(\frac{t\cdot t_i}{R\rho}\right)\right|\epsilon.
\end{eqnarray}
Consider for simplicity the situation in which for fixed $\mu$ and $\nu$,
$\alpha=\lambda^{\mu/\nu}$.  Then the first terms of \eqref{eq:24},
\eqref{eq:25} and \eqref{eq:26} are essentially the same, whereas the last
term in \eqref{eq:26} is obviously smaller than that in either
\eqref{eq:24} or \eqref{eq:25}.



\section{Numerical illustrations}

In all our experiments the regularization parameters $\lambda, \alpha$ are
chosen by means of the so-called quasi-optimality criterion. This
heuristic approach was originally proposed in \cite{T1965} and has been
recently advocated in \cite{Ki2008}.

In the case of the noise reduction method (\ref{eq:7}), the quasi-optimality criterion chooses a regularization parameter $\lambda=\lambda_l$ from a set

\begin{equation}\label{eq:27}
Q_L^\lambda=\left\{\lambda=\lambda_i=\lambda_0q^i,\ i=0, 1, 2,\ldots,L\right\},\ \ q>1,
\end{equation}
such that

\begin{scriptsize}
$$\left\|A_M^{-1}(T_{\lambda_l,M}y^\epsilon-T_{\lambda_{l-1},M}y^\epsilon)\right\|=\min\left\{\left\|A_M^{-1}(T_{\lambda_i,M}y^\epsilon-T_{\lambda_{i-1},M}y^\epsilon)\right\|,\ i=0, 1,\ldots,L\right\},$$
\end{scriptsize}
where $\left\|\cdot\right\|$ denotes the uniform norm in the space $C=C(\Omega_R)$ of continuous functions. 

In the similar way we can apply the quasi-optimality criterion in the regularized collocation method. Then a regularization parameter $\alpha=\alpha_k$ is chosen from a set

\begin{equation}\label{eq:28}
Q_K^\alpha=\left\{\alpha=\alpha_j=\alpha_0r^j,\ j=0, 1, 2,\ldots,K\right\},\ \ r>1,
\end{equation}
in such a way that

$$\left\|R_{\alpha_k, M}y^\epsilon-R_{\alpha_{k-1}, M}y^\epsilon\right\|=\min\left\{\left\|R_{\alpha_j, M}y^\epsilon-R_{\alpha_{j-1}, M}y^\epsilon\right\|,\ j=0, 1,\ldots,K\right\}.$$

For the two-step regularization the quasi-optimality criterion can be implemented as follows. First, for every $\alpha=\alpha_j \in Q_K^\alpha$ we choose $\lambda=\lambda_l=\lambda(\alpha_j)$ from the set (\ref{eq:27}) such that

$$\left\|x^\epsilon_{\alpha_j,\lambda_l}-x^\epsilon_{\alpha_j,\lambda_{l-1}}\right\|=\min\left\{\left\|x^\epsilon_{\alpha_j,\lambda_i}-x^\epsilon_{\alpha_j,\lambda_{i-1}}\right\|,\ i=0, 1,\ldots,L\right\}.$$
Next, we apply the quasi-optimality criterion to the sequence $\left\{x^\epsilon_{\lambda(\alpha_j),\alpha_j}\right\}$ parametrized by $\alpha_j \in Q_K^\alpha$. More specifically, we select $\alpha_k \in Q_K^\alpha$ such that

\begin{scriptsize}
$$\left\|x^\epsilon_{\alpha_k, \lambda(\alpha_k)}-x^\epsilon_{\alpha_{k-1}, \lambda(\alpha_{k-1})}\right\|=\min\left\{\left\|x^\epsilon_{\alpha_j, \lambda(\alpha_j)}-x^\epsilon_{\alpha_{j-1}, \lambda(\alpha_{j-1})}\right\|,\ j=0, 1,\ldots,K\right\}.$$
\end{scriptsize}
Then, a regularized approximate solution $x^\epsilon_{\alpha,\lambda}$ of our choice is defined by (\ref{eq:19}) with $\lambda=\lambda(\alpha_k)$ and $\alpha=\alpha_k$.

In all our experiments we follow \cite{G2009, N2013} and assume that $\left\{t_i\right\}_{i=1}^N$ is the set of Gauss-Legendre points, for which the positive quadrature weights are known analytically. In this case $N=2(M+1)^2$, and we take $M=30$.
	
The data are simulated in the following way. First we generate a spherical function

\[
y(t)=\sum_{k=0}^M a_k\sum_{j=1}^{2k+1}\widehat{x}_{k,j}\frac{1}{\rho}Y_{k,j}\left(\frac{t}{\rho}\right),\ \ t\in \Omega_\rho,
\]
where $\widehat{x}_{k,j}=(k+1/2)^{-\upsilon} g_{k,j}, \ k=0,...,M, \
j=1,...2k+1$, $\upsilon>0$, and $g_{k,j}$ are uniformly distributed random
values from $\left[-1,1\right]$. This means that the solutions of the simulated problems have Sobolev smoothness $\upsilon$, and  $y$ can be seen as a
result of the action of the operator $A$ on a function $x$ from the
spherical Sobolev space $H_2^\upsilon$. A noisy spherical function
$y^\epsilon$ is simulated by adding a Gaussian white noise of intensity
$\epsilon=0.05$ to the values of the initial function $y$ at the points
$\left\{t_i\right\}_{i=1}^N$.

\begin{figure}
        \centering
        \begin{subfigure}[b]{0.42\linewidth}
               \includegraphics[width=\textwidth]{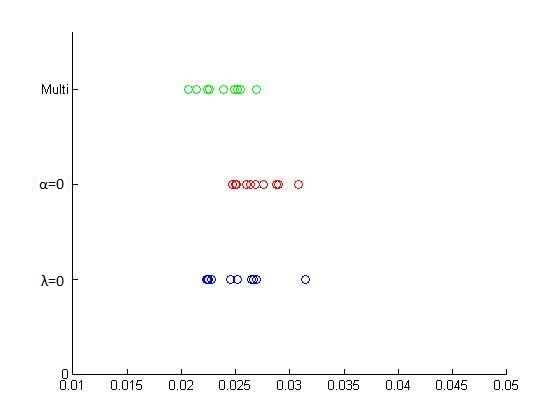}
                \caption{\small{Relative errors for the case $a_k=(1.48)^{-k}, \upsilon=3/2, \beta_k^2=a_k^{-1}, k=1,2,\ldots,M$.}}
        \end{subfigure}%
      \quad
        \begin{subfigure}[b]{0.42\linewidth}
               \includegraphics[width=\textwidth]{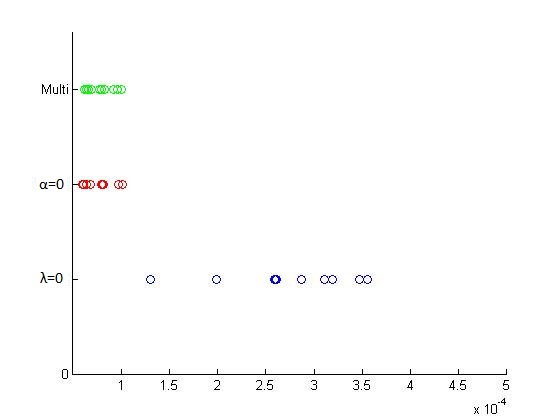}
                \caption{\small{Relative errors for the case $a_k=(1.48)^{-k}, \upsilon=11/2, \beta_k^2=a_k^{-1}(k+1/2)^{7/2}, k=1,2,\ldots,M$.}}
        \end{subfigure}
    \\
		        \begin{subfigure}[b]{0.42\linewidth}
                \includegraphics[width=\textwidth]{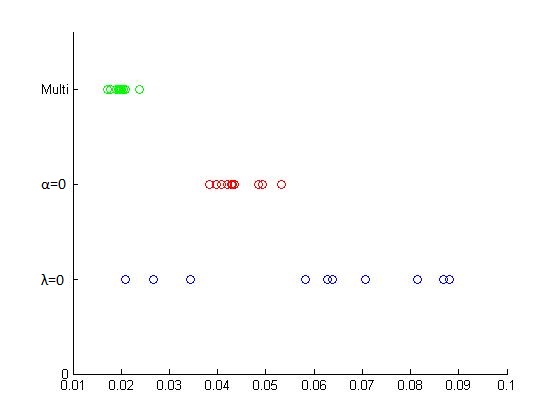}
                \caption{\small{Relative errors for the case $a_k=(k+1)^{-2}, \upsilon=3/2, \beta_k^2=a_k^{-1}, k=1,2,\ldots,M$.}}
        \end{subfigure}%
      \quad
        \begin{subfigure}[b]{0.42\linewidth}
                \includegraphics[width=\textwidth]{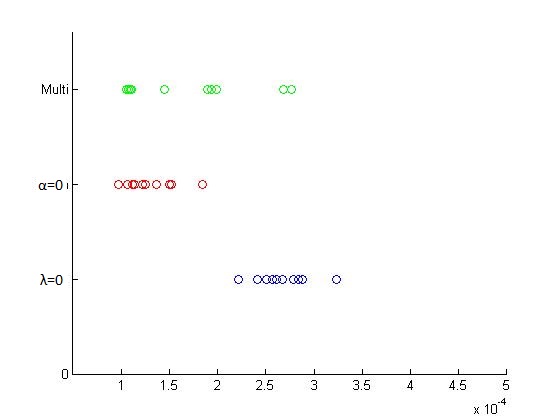}
              \caption{\small{Relative errors for the case $a_k=(k+1)^{-2}, \upsilon=11/2, \beta_k^2=a_k^{-1}(k+1/2)^{7/2}, k=1,2,\ldots,M$.}}
        \end{subfigure}
		\\
        \begin{subfigure}[b]{0.42\linewidth}
               \includegraphics[width=\textwidth]{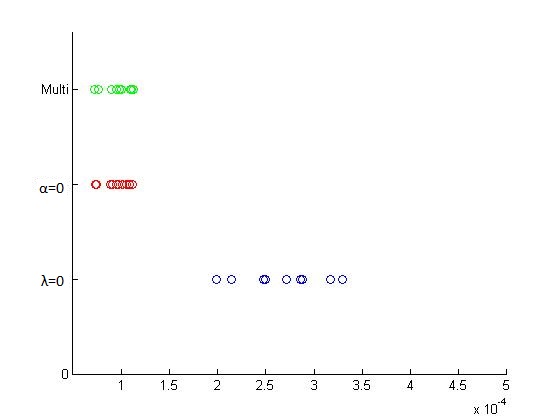}
                \caption{\small{Relative errors for the case $a_k=(k+1)^{-2}, \upsilon=11/2, \beta_k^2=a_k^{-1}(k+1/2)^{11/2}, k=1,2,\ldots,M$.}}
        \end{subfigure}
        \caption{Numerical illustrations. The comparison of the performance of single-parameter regularizations and their combination (Multi). In each of the figures the results corresponding to the two-step regularization are displayed at the top, the case of noise reduction with direct inversion is shown in the middle, and the case of regularized collocation corresponds to the bottom row. The horizontal axes show values of the relative errors (the vertical axes have no significance)}
				\label{fig:1}
\end{figure}

To assess the performance of the considered schemes we measure the relative error

\[
\frac{\left\|x-x^\epsilon_{\alpha,\lambda}\right\|}{\left\|x\right\|},
	\]
where

$$x=\sum_{k=0}^M\sum_{j=1}^{2k+1}\widehat{x}_{k,j}\frac{1}{R}Y_{k,j}\left(\frac{\cdot}{R}\right),$$
and the approximation $x^\epsilon_{\alpha,\lambda}$ is given by (\ref{eq:19}).

The results are displayed in Figure \ref{fig:1}, where each subfigure corresponds to different values of $a_k, \upsilon$, and penalization weights $\beta_k$. In Figure \ref{fig:1} each circle exhibits a value of the relative error in solving the problem with one of 10 simulated data, for each of three methods: regularized collocation method corresponding to the case when $\lambda=0$, noise reduction method with direct inversion (the case when $\alpha=0$), and the two-step regularization method. Note that such a form of graphical representation of the performance of different regularization algorithms is rather common (see, e.g., \cite{H1998}).

In our experiments we tried to cover different degrees of ill-posedness of (\ref{eq:1}), (\ref{eq:2}), i.e. the so-called moderate ill-posedness when the spherical symbol tends to zero polynomially (Figures \ref{fig:1}c, \ref{fig:1}d, \ref{fig:1}e), and severely ill-posed case with exponential symbol decrease (Figures \ref{fig:1}a, \ref{fig:1}b). Note that the spherical pseudo-differential operators with polynomial symbol have been studied in \cite{L2006}, while the operators with exponentialy decreasing symbol, appearing in geoscience, have been discussed in the Introduction.

It is instructive to look at Figures \ref{fig:1}a, \ref{fig:1}b and \ref{fig:1}c, \ref{fig:1}d from the view point of \cite{P2014}, where \textit{a priori} choice of the penalization weights $\beta_k$ has been discussed. For the situations described in the above mentioned figures the choice of $\beta_k$  suggested in  \cite{P2014} would be $\beta_k^2=a_k^{-1}(k+1)^\upsilon$. But such a choice requires knowledge of the smoothness $\upsilon$ of the unknown solutions. In practice the smooothness $\upsilon$ is usually unknown, and the  Figures \ref{fig:1}a--\ref{fig:1}d correspond to the case of ``underpenalization''. 

As can be seen from Figures \ref{fig:1}a--\ref{fig:1}d in such a case it is not clear \textit{a priori}  which of the considered single-parameter regularizations performs better. At the same time, these figures show that the proposed two-step method automatically follows the leader. The situation does not change when the penalization weights are chosen according to \cite{P2014} (see Figure \ref{fig:1}e). 

In all our experiments the choice of the regularization parameters was done by using the quasi-optimality criterion described above with $\alpha_0=\lambda_0=1.78e-5$, $q=r=1.25$, and $L=K=50$. Note also that we add the term $\lambda=\alpha=0$ to each of the sets (\ref{eq:27}), and (\ref{eq:28}).

These numerical results confirm the theoretical conclusion that the constructed two-step regularization methods can perform similarly to (or even better than) the best of single-parameter schemes.

\section{Conclusion}

We studied a two-parameter regularization scheme, which is a combination of the inversion after a data presmoothing and regularized collocation. In principle, each of the combined methods can be used alone as a regularization method. However, their performance very much depends on the interplay between the degree of ill-posedness of (\ref{eq:1}), (\ref{eq:2}), coded in the rate of increase of the spherical symbol $\left\{a_k\right\}$, and the smoothness of the solution.

For example, from Figure \ref{fig:1} one may conclude that for a severely ill-posed problem and moderately smooth solution (Figure \ref{fig:1}a) the regularized collocation performs a bit better than the inversion after a data presmoothing, but for a moderately ill-posed problem (Figure \ref{fig:1}c) this is not the case.

Since for ill-posed problems the smoothness of their solutions is usually unknown, it is not clear which of the single parameter regularizations is the better one. In this situation the combination of the single parameter regularizations, resulting in a two-parameter scheme, can be seen as a method of choice. The above figures illustrate this point.

At the end of Section 4 we have used Theorems 2 and 3 to demonstrate theoretically that the two-parameter scheme (\ref{eq:19}) is able to provide a better error bound than ones for the schemes involved in the combination. However, this demonstration is limited to an \textit{a priori} choice of the regularization parameters, requiring knowledge of the solution smoothness, and therefore seldom usable in practice.

On the other hand, in Section 5 the above mentioned ability of the two-parameter scheme has been demonstrated for an \textit{a posteriori} choice of the regularization parameters, where no knowledge of the solution smoothness has been used. At the same time, \textit{a posteriori} parameter choice employed in Section 5 is of a heuristic nature. We hope that this paper will stimulate research towards theoretical justification of \textit{a posteriori} rules for multi-parameter regularization.

\begin{acknowledgements}
The research of the first author was partially supported by the SRFDP fund
20120171120005 of China, Guangdong Provincial Government of China through
the ``Computational Science Innovative Research Team" program and
Guangdong Province Key Laboratory of Computational Science at the Sun
Yat-sen University. The second and the fourth authors are supported by the
Austrian Fonds Zur Forderung der Wissenscheftlichen Forschung (FWF), grant
P25424. The third author acknowledges the support of the Australian
Research Council, project DP 120101816.
\end{acknowledgements}



\end{document}